\documentclass[reqno]{amsart}
\usepackage{amssymb}
\usepackage{aliascnt}
\usepackage{commath}
\usepackage{mathrsfs}
\usepackage{enumitem}
\usepackage{comment}
\usepackage[bookmarks=false,pdfstartview=FitH, pdfborder={0 0 0}, colorlinks=true,citecolor=blue, linkcolor=blue]{hyperref}
\usepackage{aliascnt} 

\theoremstyle{plain}
\newtheorem{theorem}{Theorem}[section]

\newaliascnt{conjecture}{theorem}

\aliascntresetthe{conjecture}

\newaliascnt{corollary}{theorem}

\aliascntresetthe{corollary}

\newaliascnt{lemma}{theorem}

\aliascntresetthe{lemma}

\newaliascnt{fact}{theorem}

\aliascntresetthe{fact}

\newaliascnt{claim}{theorem}

\aliascntresetthe{claim}

\newaliascnt{proposition}{theorem}
\newtheorem{proposition}[proposition]{Proposition}
\aliascntresetthe{proposition}

\theoremstyle{definition}

\newaliascnt{definition}{theorem}

\aliascntresetthe{definition}

\newaliascnt{example}{theorem}

\aliascntresetthe{example}

\theoremstyle{remark}

\newaliascnt{remark}{theorem}

\aliascntresetthe{remark}

\numberwithin{equation}{section}

\begin{document}
\title{Conjugate radius of open manifolds}
\author[J.~Ge]{Jian Ge*}
\address[Ge]{School of Mathematical Sciences, Laboratory of Mathematics and Complex Systems, Beijing Normal University, Beijing 100875, P. R. China.}
\email{jge@bnu.edu.cn}
\thanks{NSFC 12371049 and the Fundamental Research Funds for the Central Universities.}

\author[S.~Zhang]{Shimeng Zhang}
\address[Zhang]{School of Mathematical Sciences, Laboratory of Mathematics and Complex Systems, Beijing Normal University, Beijing 100875, P. R. China.}
\email{202531130043@mail.bnu.edu.cn}

\subjclass[2000]{Primary: 53C23; Secondary: 51K10}
\keywords{mean convex}
\begin{abstract}
In this short note, we establish an upper bound for the conjugate radius of an open $n$-dimensional Riemannian manifold under a scalar curvature lower bound and a bottom-of-spectrum upper bound. As a consequence, if $\lambda_{0}(M)=0$ and scalar curvature $\ge n(n-1)$, then the conjugate radius $\le \pi$.
\end{abstract}
\maketitle
\section{Introduction}
The conjugate radius measures the extend to which geodesics remain free of conjugate points. It is naturally tied to the Jacobi equation: along a geodesic $\gamma$, Jacobi fields satisfy a second-order differential equation whose coefficients are determined by the sectional curvatures containing $\dot{\gamma}$. Thus, the conjugate radius appeaars at first to be governed by sectional curvature. A remarkable theorem of Leon Green shows, however, that it is also constrained by scalar curvature in the average sense. In 1963, Green \cite{Gre63} proved the following result:
\begin{theorem}[Green's Theorem]
	Let $M$ be an $n$-dimensional closed Riemannian manifold. If the conjugate radius of $M$ is bounded from below by ${\rm conj}(M)\geq a$, then
	\[
		\frac1{{\rm Vol}(M)}\int_M{\rm scal}(p)~{\rm dvol}_M(p)\leq n(n-1)\frac{\pi^2}{a^2},
	\]
	with equality if and only if $M$ has constant positive sectional curvature $\pi^2/a^2$.
\end{theorem}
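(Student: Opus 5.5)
We follow Green's classical strategy: integrate an index-form inequality over the unit tangent bundle $SM$ and use the invariance of Liouville measure under the geodesic flow $\phi_t$. Set $\ell=a$. Since ${\rm conj}(M)\ge a$, no geodesic segment of length $<a$ has a conjugate point. Hence for every unit vector $v\in S_pM$, every unit $e\perp v$, and every smooth $f$ on $[0,a]$ with $f(0)=f(a)=0$, the index form along $\gamma_v|_{[0,a]}$ is nonnegative. (At length exactly $a$ a conjugate point may occur, but nonnegativity still holds by a limiting argument from lengths $a-\epsilon$.) We will use the parallel field $f(t)E(t)$ with $f(t)=\sin(\pi t/a)$, where $E$ is the parallel transport of $e$. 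This gives
\[
	\int_0^a \Big(f'(t)^2 - f(t)^2 K\big(\dot\gamma_v(t),E(t)\big)\Big)\,dt\ \ge 0 .
\]

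Next, sum over an orthonormal basis $e_1,\dots,e_{n-1}$ of $v^\perp$. Parallel transport preserves orthonormality, so the sectional curvatures add up to ${\rm Ric}(\dot\gamma_v(t))$, and we get
\[
	(n-1)\int_0^a f'^2\,dt \ \ge\ \int_0^a f(t)^2\,{\rm Ric}\big(\phi_t v\big)\,dt .
\]
Now integrate over $v\in SM$ with respect to the Liouville measure $\mu$ and apply Fubini. Invariance of $\mu$ under $\phi_t$ gives $\int_{SM}{\rm Ric}(\phi_t v)\,d\mu=\int_{SM}{\rm Ric}(v)\,d\mu$ for each $t$. Averaging the Ricci quadratic form over the unit sphere in each fiber gives ${\rm scal}(p)/n$, so
\[
	\int_{SM}{\rm Ric}\,d\mu=\frac{\omega_{n-1}}{n}\int_M{\rm scal}\,{\rm dvol}_M .
\]
We also have $\mu(SM)=\omega_{n-1}{\rm Vol}(M)$, and compactness of $M$ is used here so that all integrals are finite. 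Combining these,
\[
	(n-1)\,{\rm Vol}(M)\int_0^a f'^2 \ \ge\ \frac1n\Big(\int_M{\rm scal}\Big)\int_0^a f^2 .
\]
With $f=\sin(\pi t/a)$, the ratio $\int f'^2\big/\int f^2$ equals $\pi^2/a^2$, which yields the stated inequality.

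For the equality case, every inequality above must be an equality for almost every, and hence by continuity every, $v$ and $e$. Equality in the index form forces $fE$ to be a Jacobi field along $\gamma_v$. The Jacobi equation $f''E+fR(E,\dot\gamma)\dot\gamma=0$ with $f''=-(\pi^2/a^2)f$ gives $R(E,\dot\gamma)\dot\gamma=(\pi^2/a^2)E$ wherever $f\neq0$, in particular at interior times. Since $v$ and $e$ are arbitrary, every sectional curvature equals $\pi^2/a^2$; evaluating at an interior point and moving along geodesics covers every base point. Conversely, the round metric of curvature $\pi^2/a^2$ has ${\rm conj}=a$ and constant scalar curvature $n(n-1)\pi^2/a^2$, so equality holds.

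The main obstacle is the equality step. Upgrading "equality in the integrated inequality" to "equality of the index form for every $(v,e)$" requires that the index form be $\ge0$ pointwise, which is established, and continuous in $(v,e)$, which holds. Then $fE$ is a minimizer of a nonnegative quadratic form with zero value, hence lies in its kernel, so it is a Jacobi field. This works because the index form on $[0,a]$ is positive semidefinite, with kernel equal to the Jacobi fields vanishing at the endpoints. That characterization is standard and is the point to state carefully.
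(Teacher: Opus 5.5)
Your proof is correct and follows the same route as the paper. The paper only quotes Green's theorem and gives no proof of it, but its proof of the Main Theorem in Section 3 is exactly this averaging, over $SM$ with the flow-invariant Liouville measure, of the index form of $\sin(\pi t/a)E_i$. On a closed manifold one may take the test function $f\equiv 1$ there, which removes the $|\nabla f|^2$ term and yields your inequality; the paper works with $a<{\rm conj}(M)$ strictly, and your limiting argument $b\uparrow a$ handles the endpoint case equally well.

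Your equality analysis goes beyond anything the paper proves, and it is sound. The continuous nonnegative integrand on $SM$ must vanish identically, so $I(\sin(\pi t/a)E,\sin(\pi t/a)E)=0$ for every $(v,e)$. That field therefore lies in the null space of the positive semidefinite index form, so it is a Jacobi field, which forces $K\equiv \pi^2/a^2$.
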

For complete noncompact Riemannian manifolds, it remains open whether a scalar curvature lower bound alone forces an upper bound on the conjugate radius.

In 2022, Zhu \cite{Zhu22} established, among other results, the open manifold case under the additional assumption of non-negative Ricci curvature:
\begin{theorem}[Zhu]
	Let $M$ be an $n$-dimensional open Riemannian manifold satisfying
	\[
		\mathrm{scal}_M\geq n(n-1) \quad \text{and }\ {\rm Ric}_M\geq0,
	\]
	then the conjugate radius of $M$ satisfy:
	\[
		{\rm conj}(M)\leq\pi.
	\]
\end{theorem}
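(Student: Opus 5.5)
We argue by contradiction, following Green's integral-geometric argument and using ${\rm Ric}_M\ge 0$ twice: once to discard an error term, and once to control volume growth. Suppose ${\rm conj}(M)>\pi$ and fix $a$ with $\pi<a<{\rm conj}(M)$. For a unit vector $v\in SM$ let $\gamma_v$ be the geodesic with $\dot\gamma_v(0)=v$ and let $\phi_t$ denote the geodesic flow on the unit tangent bundle $SM$. Since $\gamma_v$ has no conjugate points on $[0,a]$, the index form of $\gamma_v$ on $[0,a]$ is positive semidefinite on fields vanishing at the endpoints.

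\textbf{Step 1 (pointwise inequality).} Take parallel orthonormal fields $E_1,\dots,E_{n-1}\perp\dot\gamma_v$ and set $f(t)=\sin(\pi t/a)$. Apply the index form to $fE_i$ and sum over $i$ to get
\[
(n-1)\int_0^a f'(t)^2\,dt\;\ge\;\int_0^a f(t)^2\,{\rm Ric}(\phi_t v)\,dt .
\]
Here ${\rm Ric}(w)$ means ${\rm Ric}(w,w)$. The left-hand side equals $(n-1)\frac{\pi^2}{a^2}\cdot\frac a2$.

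\textbf{Step 2 (integrate over a ball).} Integrate the inequality over $v\in SB_R(p)$ with respect to the Liouville measure $\mu$, which is invariant under $\phi_t$. On the right, for each $t$ we get $\int_{\phi_t(SB_R)}{\rm Ric}\,d\mu$. Since $\phi_t(SB_R)\supseteq SB_{R-a}$ for $0\le t\le a$ and ${\rm Ric}\ge0$, this is at least $\int_{SB_{R-a}}{\rm Ric}\,d\mu$. Averaging ${\rm Ric}$ over each unit sphere gives
\[
\int_{SB_{R-a}}{\rm Ric}\,d\mu=\frac{|S^{n-1}|}{n}\int_{B_{R-a}}{\rm scal}\;\ge\;(n-1)|S^{n-1}|\,{\rm Vol}(B_{R-a}).
\]
Since $\mu(SB_R)=|S^{n-1}|{\rm Vol}(B_R)$ and $\int_0^a f^2=a/2$, the factors $(n-1)$, $|S^{n-1}|$ and $a/2$ cancel, leaving
\[
\frac{\pi^2}{a^2}\,{\rm Vol}(B_R(p))\;\ge\;{\rm Vol}(B_{R-a}(p))\qquad\text{for all }R>a.
\]

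\textbf{Step 3 (volume growth; the main point).} We need
\[
\limsup_{R\to\infty}\frac{{\rm Vol}(B_{R-a})}{{\rm Vol}(B_R)}=1 .
\]
Suppose instead that ${\rm Vol}(B_{R-a})\le c\,{\rm Vol}(B_R)$ for some $c<1$ and all $R\ge R_0$. Iterating gives ${\rm Vol}(B_{R_0+ka})\ge c^{-k}{\rm Vol}(B_{R_0})$, i.e.\ exponential volume growth. This contradicts the Bishop--Gromov bound ${\rm Vol}(B_R)\le\omega_nR^n$, which holds because ${\rm Ric}\ge0$.

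Choosing a sequence $R_j\to\infty$ along which the ratio tends to $1$ in Step 2 gives $\pi^2/a^2\ge1$, i.e.\ $a\le\pi$. This contradicts the choice $a>\pi$, so ${\rm conj}(M)\le\pi$.

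The only delicate point is the boundary effect in Step 2. The flow does not preserve $SB_R$, so we cannot use invariance exactly as in Green's closed case. The sign ${\rm Ric}\ge0$ lets us discard the region that leaves the ball, and polynomial volume growth makes the remaining annular loss negligible along a subsequence. This suggests that the essential input is only that the amenability-type ratio ${\rm Vol}(B_{R-a})/{\rm Vol}(B_R)$ can be made close to $1$, a condition of the same flavour as $\lambda_0(M)=0$ in the abstract.
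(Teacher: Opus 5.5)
Your proof is correct, but it localizes Green's argument differently from the paper.

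\textbf{The paper's route.} There, Zhu's theorem is a corollary of two results. Proposition~\ref{prop:ric} (a cutoff function plus Bishop--Gromov doubling) gives $\lambda_0(M)=0$. Theorem~\ref{main} shows that $a<{\rm conj}(M)$ and ${\rm scal}_M\ge n(n-1)$ force $\lambda_0(M)\ge n(1-\pi^2/a^2)$. That estimate is proved by building a smooth compactly supported $f$ into the variation field, $V_i=f(\gamma(t))\sin(\pi t/a)E_i$, and integrating over all of $SM$. Liouville invariance then applies exactly, and the cost of localizing appears as a multiple of the Dirichlet energy $\int_M|\nabla f|^2$, which the Rayleigh quotient absorbs.

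\textbf{Your route.} You use a sharp cutoff instead: you integrate the unmodified Green inequality over $SB_R(p)$. Invariance then only gives $\int_{\phi_t(SB_R)}{\rm Ric}\,d\mu$. You pay for this with a boundary region, which you discard using the pointwise sign ${\rm Ric}\ge 0$ and the inclusion $SB_{R-a}\subseteq\phi_t(SB_R)$. The resulting inequality $\frac{\pi^2}{a^2}{\rm Vol}(B_R)\ge{\rm Vol}(B_{R-a})$ with $a>\pi$ forces exponential volume growth, contradicting Bishop--Gromov.

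\textbf{What each buys.} Your route is more elementary: no gradient term, no spectral theory, no approximation of Lipschitz test functions. It also only uses that balls grow subexponentially, rather than volume doubling. The paper's route buys generality. Its key estimate uses no sign of Ricci curvature at all, so ${\rm Ric}\ge0$ enters only through $\lambda_0(M)=0$. One therefore gets ${\rm conj}(M)\le\pi/\sqrt{1-\lambda_0(M)/n}$ whenever $\lambda_0(M)<n$, including for instance finite-volume manifolds (Proposition~\ref{prop:finiteVol}).

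In your argument the sign of ${\rm Ric}$ over the annulus $B_{R+a}\setminus B_{R-a}$ is essential. There $\phi_t(SB_R)$ is not a union of full unit spheres, so Ricci curvature cannot be averaged into scalar curvature. Your closing remark matches the paper's viewpoint: replacing the sharp cutoff by a smooth $f$ turns the boundary loss into $\int_M|\nabla f|^2$, which is exactly what $\lambda_0$ controls.
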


In 2025, Kwong \cite{Kwo25} obtained a related result under a different curvature together with the assumption of finite volume:
\begin{theorem}[Kwong]
	Let $M$ be an $n$-dimensional complete Riemannian with finite volume. Assume that
	\begin{enumerate}
		\item $\int_{SM}{\rm Ric}_M^-(p;v)~d\mu(p,v)$ is finite, where ${\rm Ric}_M^-(p;v)=\max\{-{\rm Ric}_M(p;v),0\}$ denotes the negative part of the Ricci curvature and $SM$ denotes the unit sphere bundle over $M$;
		\item $\int_M{\rm scal}(p)~{\rm dvol}_M(p)\geq n(n-1){\rm Vol}(M)$.
	\end{enumerate}
	Then the conjugate radius of $M$ satisfies
	\[
		{\rm conj}(M)\leq\pi,
	\]
	with equality if and only if $M$ has constant sectional curvature equal to $1$.
\end{theorem}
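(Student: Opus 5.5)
The plan is to run Green's integral-geometric argument on the unit sphere bundle, with finite volume and integrability of $\mathrm{Ric}^-$ replacing compactness. Let $\mu$ be the Liouville measure on $SM$ and $\phi_t$ the geodesic flow. Completeness makes $\phi_t$ defined for all $t$, and it preserves $\mu$. Since $\mathrm{Vol}(M)<\infty$, we have $\mu(SM)=\omega_{n-1}\mathrm{Vol}(M)<\infty$, where $\omega_{n-1}$ is the volume of the unit sphere $S^{n-1}$.

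\emph{Step 1 (pointwise inequality along a geodesic).} Suppose $\mathrm{conj}(M)\ge a$. Then for every $v\in SM$ the geodesic $\gamma_v$ has no conjugate points in $(0,a)$. Hence the index form on $[0,a]$ is nonnegative on fields vanishing at both endpoints; at $t=a$ itself this follows by a limiting argument. Let $E_1,\dots,E_{n-1}$ be parallel orthonormal fields perpendicular to $\dot\gamma_v$, and test with $V_i=\sin(\pi t/a)E_i$. Summing over $i$ gives
\[
\int_0^a \sin^2(\pi t/a)\,\mathrm{Ric}(\phi_t v)\,dt\;\le\;(n-1)\frac{\pi^2}{a^2}\int_0^a\cos^2(\pi t/a)\,dt=(n-1)\frac{\pi^2}{a^2}\cdot\frac a2 .
\]

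\emph{Step 2 (averaging over $SM$).} Integrate the inequality of Step 1 over $SM$ against $\mu$ and swap the order of integration. This is the step needing care, since $\mathrm{Ric}$ need not be integrable a priori. Split $\mathrm{Ric}=\mathrm{Ric}^+-\mathrm{Ric}^-$.
\begin{itemize}
\item The $\mathrm{Ric}^-$ part is integrable on $[0,a]\times SM$ by hypothesis (1) and invariance of $\mu$, so Fubini applies to it.
\item The $\mathrm{Ric}^+$ part is nonnegative, so Tonelli applies to it.
\end{itemize}
Using $\int_{SM}f\circ\phi_t\,d\mu=\int_{SM}f\,d\mu$, we obtain
\[
\frac a2\int_{SM}\mathrm{Ric}\,d\mu\le (n-1)\frac{\pi^2}{a^2}\cdot\frac a2\,\mu(SM).
\]
Fiberwise, $\int_{S_pM}\mathrm{Ric}(p;v)\,dv=\frac{\omega_{n-1}}{n}\mathrm{scal}(p)$. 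Therefore
\[
\int_M\mathrm{scal}\le n(n-1)\frac{\pi^2}{a^2}\mathrm{Vol}(M).
\]
In particular $\int_{SM}\mathrm{Ric}^+\,d\mu<\infty$, so all quantities are finite.

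\emph{Step 3 (conclusion and rigidity).} If $\mathrm{conj}(M)>\pi$, take $a\in(\pi,\mathrm{conj}(M))$. Step 2 then gives $\int_M\mathrm{scal}<n(n-1)\mathrm{Vol}(M)$, contradicting hypothesis (2). Hence $\mathrm{conj}(M)\le\pi$.

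If $\mathrm{conj}(M)=\pi$, take $a=\pi$. Hypothesis (2) and Step 2 force equality in the averaged inequality. Since the pointwise inequality of Step 1 holds for every $v$, it must be an equality for $\mu$-a.e.\ $v$ and each $i$. A nonnegative index form vanishing on $V_i$ makes $V_i$ a Jacobi field: $V_i$ minimizes a nonnegative quadratic form, so it lies in its null space. This gives $R(V_i,\dot\gamma)\dot\gamma=V_i$, i.e.\ $K(\dot\gamma_v,E_i)=1$ for a.e.\ $v$ along $\gamma_v$. By continuity of curvature, all sectional curvatures equal $1$. Conversely, constant curvature $1$ gives $\mathrm{conj}=\pi$.

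The main obstacle is the measure-theoretic justification in Step 2, namely exchanging integrals and using flow invariance without compactness; the splitting into $\mathrm{Ric}^{\pm}$ is designed to handle it. A second point needing care is the limiting argument at $a=\mathrm{conj}(M)$ in the rigidity case.
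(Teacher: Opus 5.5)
The paper does not prove this statement; it only quotes it from Kwong's work, so there is no in-paper proof to match. Your argument is correct. It runs on the same Green-type engine as the paper's proof of the Main Theorem: the index form of $\sin(\pi t/a)E_i$, summation to $\mathrm{Ric}$, averaging over $SM$ with the flow-invariant Liouville measure, and the fiber identity $\int_{S_pM}\mathrm{Ric}(p;v)\,dv=\frac{\omega_{n-1}}{n}\mathrm{scal}(p)$.

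The two arguments differ in how they handle noncompactness. The paper multiplies the test field by a compactly supported $f$.
\begin{itemize}
\item This makes every interchange of integrals trivial, since all integrands are compactly supported on $[0,a]\times SM$. It needs neither finite volume nor any integrability of curvature.
\item The cost is the extra term $(n-1)\int_M|\nabla f|^2$. Absorbing it is what forces the pointwise hypothesis $\mathrm{scal}\ge n(n-1)$ and brings in $\lambda_0$.
\item Consequently the Main Theorem does not by itself contain Kwong's result, which has an integral scalar hypothesis, allows the compact case, and includes rigidity.
\end{itemize}
You use the unweighted field and pay for the interchange with $\mu(SM)=\omega_{n-1}\mathrm{Vol}(M)<\infty$ and hypothesis (1). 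Tonelli handles the $\mathrm{Ric}^+$ part, Fubini the $\mathrm{Ric}^-$ part, and the pointwise inequality then bounds $\int_{SM}\mathrm{Ric}^+\,d\mu$. This gives $\int_M\mathrm{scal}\le n(n-1)\frac{\pi^2}{a^2}\mathrm{Vol}(M)$ in one stroke. In the borderline case $a=\pi$ the pointwise inequality must be an equality $\mu$-a.e., so rigidity drops out of the null-space/Jacobi-field argument. With the paper's cutoffs $\varphi_R$ from Proposition~\ref{prop:finiteVol} one could also reach the inequality by letting $R\to\infty$, but the equality case would then need an extra localization step.

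Three small points are worth making explicit:
\begin{itemize}
\item The set of $v$ where equality holds has full Liouville measure and is therefore dense in $SM$. This is what justifies extending $K=1$ to all planes by continuity.
\item The Jacobi equation gives $R(E_i,\dot\gamma)\dot\gamma=E_i$ for every $i$. So the Jacobi operator is the identity on $\dot\gamma^{\perp}$, and you get $K(\dot\gamma,e)=1$ for every unit $e\perp\dot\gamma$, not just for the chosen $E_i$.
\item The limiting argument at $t=a$ is simply the Morse index theorem, whose index counts conjugate points only in the open interval $(0,a)$.
\end{itemize}
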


In this paper we prove:
\begin{theorem}[Main theorem]\label{main}
	Let $M$ be an $n$-dimensional open Riemannian manifold with ${\rm scal}_M\geq n(n-1)$ and $\lambda_{0}(M)<n$. Then the conjugate radius of $M$ satisfies
	\[
		{\rm conj}(M)\leq\frac\pi{\sqrt{1-\lambda_0(M)/n}},
	\]
	where $\lambda_{0}(M)$ denotes the bottom of the spectrum of the Laplacian on $M$. In particular, if in addition $\lambda_0(M)=0$, then
	\[
		{\rm conj}(M)\leq\pi.
	\]
\end{theorem}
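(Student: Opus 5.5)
Assume for contradiction that ${\rm conj}(M)>a$ with $a>\pi/\sqrt{1-\lambda_0/n}$. The plan is to localize Green's argument: instead of averaging over all of $M$, integrate against $f^2$, where $f$ is a compactly supported test function whose Rayleigh quotient is close to $\lambda_0(M)$.

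\textbf{Step 1 (pointwise Green inequality along a geodesic).} Fix a unit vector $v\in S_pM$ and let $\gamma_v$ be the geodesic it generates. Since there are no conjugate points on $[0,a]$, the index form along $\gamma_v|_{[0,a]}$ is positive semidefinite on fields vanishing at both endpoints. Take a parallel orthonormal frame $E_2,\dots,E_n$ normal to $\dot\gamma_v$ and test with $\sin(\pi t/a)E_i$. Summing over $i$ gives
\[
\int_0^a \sin^2(\pi t/a)\,{\rm Ric}(\dot\gamma_v,\dot\gamma_v)\,dt\le (n-1)\int_0^a \frac{\pi^2}{a^2}\cos^2(\pi t/a)\,dt .
\]
To bring in a weight, test instead with $\phi(t)E_i$, where $\phi(t)=\sin(\pi t/a)\,f(\gamma_v(t))$. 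This yields
\[
\int_0^a \phi^2\,{\rm Ric}(\dot\gamma_v,\dot\gamma_v)\,dt\le (n-1)\int_0^a (\phi')^2\,dt .
\]

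\textbf{Step 2 (integrate over the unit sphere bundle).} Integrate this inequality over $SM$ with respect to the Liouville measure. The geodesic flow preserves this measure, and $f$ has compact support, so all integrals are finite; we are not assuming finite volume. Apply Liouville invariance and swap the order of integration. The left side then becomes $\int_0^a\sin^2(\pi t/a)\,dt$ times $\int_{SM}f^2\,{\rm Ric}(v,v)$, which equals $\frac{a}{2}\cdot\frac{\omega_{n-1}}{n}\int_M f^2\,{\rm scal}$.

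For the right side, expand
\[
(\phi')^2=\frac{\pi^2}{a^2}\cos^2(\pi t/a)\,f^2+2\cdot\frac{\pi}{a}\cos(\pi t/a)\sin(\pi t/a)\,f\,\langle\nabla f,v\rangle+\sin^2(\pi t/a)\,\langle\nabla f,v\rangle^2 .
\]
After flowing back to time $0$, the cross term is odd in $v$, so its fiber integral vanishes. The last term averages over the fibre to $\frac{\omega_{n-1}}{n}|\nabla f|^2$. The result is
\[
\frac{a}{2n}\int_M f^2\,{\rm scal}\le (n-1)\Big[\frac{\pi^2}{a^2}\cdot\frac a2\int_M f^2+\frac{a}{2n}\int_M|\nabla f|^2\Big].
\]

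\textbf{Step 3 (conclude).} Use ${\rm scal}\ge n(n-1)$ and choose $f$ with $\int|\nabla f|^2\le(\lambda_0+\epsilon)\int f^2$. This gives
\[
n(n-1)\le (n-1)\Big(\frac{n\pi^2}{a^2}+\lambda_0+\epsilon\Big),
\]
that is, $a^2(1-(\lambda_0+\epsilon)/n)\le\pi^2$. Letting $\epsilon\to0$ contradicts the choice of $a$, which proves the bound. The case $\lambda_0(M)=0$ is then immediate.

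\textbf{Main obstacle.} The delicate point is the bookkeeping in Step 2. One must justify exchanging the $t$- and $SM$-integrals via invariance of the Liouville measure under the geodesic flow, which is fine because $f$ has compact support and the flow is complete. One must also check that the cross term really integrates to zero. For that, note that $\langle\nabla f,\dot\gamma_v(t)\rangle$ at $\gamma_v(t)$ pulls back to $\langle\nabla f,w\rangle$ at the base point $(q,w)=\Phi_t(p,v)$, and this is odd in $w$. Finally, the normalization constants ($\omega_{n-1}$ and the fiber averages $\frac1n|X|^2$) have to be tracked correctly, since they are what produce exactly the factor $1-\lambda_0/n$.
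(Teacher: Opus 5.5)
Your proposal is correct and follows essentially the same route as the paper: the same test fields $f(\gamma(t))\sin(\pi t/a)E_i$, the same averaging over $SM$ via invariance of the Liouville measure, and the same final comparison with the Rayleigh quotient. The only difference is how you handle the cross term. The paper integrates it by parts along each geodesic, merging it with the $\cos^2$ term into a $\sin^2$ term. You instead let it vanish after averaging over $SM$, by oddness in $v$, and use $\int_0^a\cos^2(\pi t/a)\,dt=\int_0^a\sin^2(\pi t/a)\,dt=a/2$. Both lead to the identical inequality $\int_M f^2\,{\rm scal}\le n(n-1)\frac{\pi^2}{a^2}\int_M f^2+(n-1)\int_M|\nabla f|^2$.
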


Our estimate gives a finite conjugate-radius bound precisely in the range $\lambda_{0}<n$. This places the result in the broader contex of comparison results relating curvature assumptions to upper bounds for the bottom of the spectrum.

Also, we note that either ${\rm Ric}_M\geq0$ or ${\rm Vol}(M)<\infty$ implies that the bottom of the spectrum of $M$ vanishes, see proposition \ref{prop:ric} and \ref{prop:finiteVol}. Thus, our theorem may be viewed as a spectral version of Zhu's or Kwong's conjugate radius estimates.

\section{Bottom of the Spectrum.}
Let $M$ be a complete Riemannian manifold. Recall that the bottom of the spectrum of $M$ is defined by
	\[
	\lambda_0(M):=\inf\left\{\left.\frac{\int_M|\nabla f|^2~{\rm dvol}_M}{\int_M f^2~{\rm dvol}_M}\right|f\in C_c^\infty(M),f\not\equiv0\right\}.
	\]
The quantity inside the infimum is called the Rayleigh quotient. If $M$ is compact, then every smooth function on $M$ has compact support, so testing the Rayleigh quotient on constant functions yields $\lambda_0(M)=0$. In this case, the more meaningful quantity is the first nonzero eigenvalue $\lambda_1(M)$. By contrast, if $M$ is noncompact, $\lambda_0(M)$ may be vanish (for example, in Euclidean space) or be positive (for example, in hyperbolic space).

Now we show that either ${\rm Ric}_M\geq0$ or ${\rm Vol}(M)<\infty$ implies $\lambda_0(M)=0$.

\begin{proposition}\label{prop:ric}
	If $M$ is an open Riemannian manifold with ${\rm Ric}_M\geq0$, then $\lambda_0(M)=0$.
\end{proposition}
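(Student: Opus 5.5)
We test the Rayleigh quotient on radial cut-off functions and use the Bishop--Gromov volume comparison to show the quotient can be made arbitrarily small. Fix a point $p\in M$, write $B(R)=B(p,R)$ and $V(R)={\rm Vol}(B(R))$. For $R>0$ take the Lipschitz function
\[
f_R(x)=\begin{cases}1, & d(p,x)\le R,\\ 2-d(p,x)/R, & R\le d(p,x)\le 2R,\\ 0, & d(p,x)\ge 2R.\end{cases}
\]
It has compact support (closed balls are compact by completeness) and $|\nabla f_R|\le 1/R$ almost everywhere. The infimum defining $\lambda_0(M)$ is unchanged if we allow compactly supported Lipschitz functions, by a standard mollification argument. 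Hence
\[
\lambda_0(M)\le \frac{\int_M|\nabla f_R|^2}{\int_M f_R^2}\le \frac{V(2R)}{R^2\,V(R)}.
\]

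Since ${\rm Ric}_M\ge 0$, Bishop--Gromov gives that $r\mapsto V(r)/r^n$ is nonincreasing, so $V(2R)\le 2^nV(R)$. Therefore
\[
\lambda_0(M)\le \frac{2^n}{R^2}\quad\text{for every } R>0.
\]
Letting $R\to\infty$ yields $\lambda_0(M)\le 0$. Since the Rayleigh quotient is nonnegative, $\lambda_0(M)=0$.

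Noncompactness is used exactly here: it guarantees that arbitrarily large $R$ are available with $B(2R)$ a genuine ball, and that the test function is admissible as a compactly supported function on an open manifold. The doubling estimate does not even require $V(R)\to\infty$.

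The only step that needs care is the admissibility of Lipschitz test functions in place of $C_c^\infty$ ones. This can be handled by smoothing $f_R$ in charts with a partition of unity, or by smoothing the distance function, which changes the numerator and denominator by arbitrarily little. Everything else is immediate from Bishop--Gromov, so there is no serious obstacle.
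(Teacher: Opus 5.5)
Your proof is correct and follows the paper's argument: a radial cutoff with gradient bounded by a multiple of $1/R$, the doubling bound $V(2R)\le 2^nV(R)$ from Bishop--Gromov, and the fact that compactly supported Lipschitz functions are admissible in the Rayleigh quotient. The only cosmetic difference is that you use the explicit piecewise-linear cutoff, which gives constant $1$, where the paper composes a smooth profile $\psi$ with $|\psi'|\le C$ with the distance function.
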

\begin{proof}
	Fix a point $p\in M$. For each $R>0$, define
	\[
		\varphi_R(x)=\psi\!\left(\frac{d(x,p)}R\right),
	\]
	where $\psi\in C^\infty(\mathbb R)$ satisfies
	\[
		\psi(t)=1 \quad \text{for } t\leq1,
	\]
	\[
		\psi(t)=0 \quad \text{for } t\geq2,
	\]
	\[
		0<\psi(t)<1 \quad \text{for } 1<t<2,
	\]
	and $|\psi'|\leq C$. Then $\varphi_R\in{\rm Lip}_c(M)$ with $\varphi_R\equiv 1$ on $B(p,R)$, $\varphi_R\equiv 0$ on $M\setminus B(p,2R)$, and $|\nabla\varphi_R|\leq C/R$ almost everywhere. Consequently,
	\[
		\int_{\{\nabla\varphi_R~{\rm exists}\}}|\nabla\varphi_R|^2~{\rm dvol}_M\leq\frac{C^2}{R^2}{\rm Vol}(B(p,2R)\backslash B(p,R))
	\]
	while
	\[
		\int_M\varphi_R^2~{\rm dvol}_M\geq{\rm Vol}(B(p,R)).
	\]
	Hence the Rayleigh quotient satisfies
	\[
		\frac{\int|\nabla\varphi_R|^2}{\int\varphi_R^2}\leq\frac{C^2}{R^2}\cdot
		\frac{{\rm Vol}(B(p,2R))-{\rm Vol}(B(p,R))}{{\rm Vol}(B(p,R))}
		\leq\frac{C^2}{R^2}\cdot\frac{{\rm Vol}(B(p,2R))}{{\rm Vol}(B(p,R))}.
	\]
	Since ${\rm Ric}_M\geq0$, the Bishop--Gromov volume comparison theorem implies that the function $r\mapsto{\rm Vol}(B(p,r))/\omega_nr^n$ is non-increasing. Therefore,
	\[
		\frac{{\rm Vol}(B(p,2R))}{(2R)^n}\leq\frac{{\rm Vol}(B(p,R))}{R^n}, \quad {\rm i.e.,}
		\frac{{\rm Vol}(B(p,2R))}{{\rm Vol}(B(p,R))}\leq 2^n.
	\]
	Substituting this estimate gives
	\[
		\frac{\int|\nabla\varphi_R|^2}{\int\varphi_R^2}\leq\frac{C^2 2^n}{R^2}\to 0 ~ \text{as } ~ R\to\infty.
	\]
	Since compactly supported Lipschitz functions can be approximated in $W^{1,2}$ by functions in $C^{\infty}_{c}(M)$, the above Rayleigh quotient estimates are admissible in the definition of $\lambda_{0}(M)$. Therefore $\lambda_0(M)=0$.
\end{proof}

\begin{proposition}\label{prop:finiteVol}
	If $M$ is an open Riemannian manifold with finite volume, then $\lambda_0(M)=0$.
\end{proposition}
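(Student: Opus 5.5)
We test the Rayleigh quotient on the same cutoff functions as in the proof of Proposition \ref{prop:ric}, but control the ratio of volumes using the finiteness of ${\rm Vol}(M)$ instead of Bishop--Gromov. Fix $p\in M$ and, for $R>0$, set $\varphi_R(x)=\psi(d(x,p)/R)$ with $\psi$ as before. Then $\varphi_R\in{\rm Lip}_c(M)$ (balls are relatively compact since $M$ is complete), $\varphi_R\equiv1$ on $B(p,R)$ and $|\nabla\varphi_R|\le C/R$ almost everywhere. As in Proposition \ref{prop:ric} we obtain
\[
\frac{\int_M|\nabla\varphi_R|^2}{\int_M\varphi_R^2}\le\frac{C^2}{R^2}\cdot\frac{{\rm Vol}(B(p,2R)\setminus B(p,R))}{{\rm Vol}(B(p,R))}.
\]

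Next we bound the volume ratio. The numerator satisfies ${\rm Vol}(B(p,2R)\setminus B(p,R))\le{\rm Vol}(M)<\infty$. For the denominator, ${\rm Vol}(B(p,R))\ge{\rm Vol}(B(p,1))>0$ for all $R\ge1$. Hence the Rayleigh quotient is at most
\[
\frac{C^2\,{\rm Vol}(M)}{R^2\,{\rm Vol}(B(p,1))},
\]
which tends to $0$ as $R\to\infty$. In fact the annulus volume itself tends to $0$ by monotone convergence, but this is not needed.

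Finally we pass from Lipschitz to smooth test functions. As in Proposition \ref{prop:ric}, compactly supported Lipschitz functions can be approximated in $W^{1,2}$ by functions in $C_c^\infty(M)$, so these quotients bound $\lambda_0(M)$ from above. Since $\lambda_0(M)\ge0$ trivially, it follows that $\lambda_0(M)=0$.

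There is no real obstacle. The only points to check are that the cutoff functions have compact support, which uses completeness of $M$, and that the denominator stays bounded away from zero, which holds because the balls $B(p,R)$ increase with $R$. Noncompactness of $M$ is not even used: on a closed manifold $\lambda_0=0$ via constant test functions anyway.
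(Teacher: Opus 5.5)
Your proof is correct and follows the paper's argument: it uses the same cutoff functions $\varphi_R$ and the same Rayleigh quotient bound from Proposition \ref{prop:ric}. The only difference is in the final limit. The paper lets the annulus volume ${\rm Vol}(B(p,2R))-{\rm Vol}(B(p,R))$ tend to $0$, whereas you bound the volume ratio uniformly by ${\rm Vol}(M)/{\rm Vol}(B(p,1))$ and let the factor $C^2/R^2$ do the work; either suffices.
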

\begin{proof}
	As in the proof of Proposition \ref{prop:ric},
	\[
		\frac{\int|\nabla\varphi_R|^2}{\int\varphi_R^2}
		\leq\frac{C^2}{R^2}\cdot\frac{{\rm Vol}(B(p,2R))-{\rm Vol}(B(p,R))}{{\rm Vol}(B(p,R))}.
	\]
	Since ${\rm Vol}(B(p,R))\to{\rm Vol}(M)<\infty$ as $R\to\infty$, we have
	\[
		{\rm Vol}(B(p,2R))-{\rm Vol}(B(p,R))\to{\rm Vol}(M)-{\rm Vol}(M)=0,
	\]
	therefore $\lambda_0(M)=0$.
\end{proof}

\section{Proof of the Main Theorem.}
Let
\[
	l:=\frac\pi{\sqrt{1-\frac{\lambda_0(M)}n}}.
\]
Suppose, to the contrary, that ${\rm conj}(M)>l$. Choose $a\in(l,{\rm conj}(M))$, and let $f\in C_c^\infty(M)$ be nonzero. For each $(p,v)\in SM$, let $\gamma(t)=\exp_p(tv)$ for $0\leq t\leq a$. Choose parallel vector fields $E_1(t), \cdots, E_{n-1}(t)$ along $\gamma$ such that
\[
	\{\gamma',E_1,\ldots,E_{n-1}\}
\]
forms an orthonormal frame. Define the variational vector field
\[
	V_i(t)=f(\gamma(t))\sin\left(\frac{\pi t}a\right)E_i(t).
\]
Then $V_i(0)=V_i(a)=0$. Since $a< {\rm conj}(M)$, the index form is nonnegative:
\begin{align*}
	0&\leq I(V_i,V_i)=\int_0^a\left(|\nabla_{\gamma'}V_i|^2-\langle R(\gamma',V_i)V_i,\gamma'\rangle\right)dt\\
 	&=\int_0^a\left[(f')^2\sin^2\left(\frac{\pi t}a\right)+2ff'\frac\pi{a}\sin\left(\frac{\pi t}a\right)\cos\left(\frac{\pi t}a\right)+f^2\frac{\pi^2}{a^2}\cos^2\left(\frac{\pi t}a\right)\right]dt\\
 	&\quad-\int_0^af^2\sin^2\left(\frac{\pi t}a\right)K(\gamma', E_i)dt,
\end{align*}
where, for brevity, we write $f$ for $f\circ\gamma$ and $f'$ for $(f\circ\gamma)'$. Here we use
\begin{equation*}
	K(X, Y):=\left\langle R(X, Y)Y, X \right\rangle
\end{equation*}
to denote the sectional curvature spanned by the orthonormal pair $X, Y$. Hence the Ricci curvature can be written as $\mathrm{Ric}(X, X)= \sum_{i=1}^{n-1} K(X, E_{i})$. Integration by parts yields
\begin{align*}
	&\quad\int_0^a2ff'\frac\pi{a}\sin\left(\frac{\pi t}a\right)\cos\left(\frac{\pi t}a\right)dt\\
	&=\int_0^a(f^2)'\frac\pi{a}\sin\left(\frac{\pi t}a\right)\cos\left(\frac{\pi t}a\right)dt\\
	&=\left.\left[f^2\frac\pi{a}\sin\left(\frac{\pi t}a\right)\cos\left(\frac{\pi t}a\right)\right]~\right|_{t=0}^{t=a}
		-\int_0^af^2\frac\pi{a}\left[\sin\left(\frac{\pi t}a\right)\cos\left(\frac{\pi t}a\right)\right]'dt\\
	&=\int_0^af^2\frac{\pi^2}{a^2}\left[\sin^2\left(\frac{\pi t}a\right)-\cos^2\left(\frac{\pi t}a\right)\right]dt.
\end{align*}
Substituting this gives
\[
	\int_0^a\left[(f')^2\sin^2\left(\frac{\pi t}a\right)+f^2\frac{\pi^2}{a^2}\sin^2\left(\frac{\pi t}a\right)
	-f^2\sin^2\left(\frac{\pi t}a\right)K(\gamma',E_i)\right]dt\geq0.
\]
Summing over $i=1,\cdots,n-1$ yields
\begin{equation}\label{eq2.1}
	\begin{aligned}
	&\quad\int_0^af^2\sin^2\left(\frac{\pi t}a\right){\rm Ric}(\gamma')dt\\
	&\leq(n-1)\frac{\pi^2}{a^2}\int_0^a f^2\sin^2\left(\frac{\pi t}a\right)dt
	+(n-1)\int_0^a\langle\nabla f,\gamma'(t)\rangle^2\sin^2\left(\frac{\pi t}a\right)dt.
	\end{aligned}
\end{equation}
Recall that, for each $t\in\mathbb R$, the geodesic flow is defined by:
\[
	\varphi_t:TM\to TM, \quad \varphi_t(q,w)=(\sigma_w(t),\sigma_w'(t)),
\]
where $w\in T_qM$ and $\sigma_w(t)=\exp_q(tw)$. In terms of the geodesic flow $\varphi_t$ rather than $\gamma$, \eqref{eq2.1} becomes
\begin{equation}\label{eq2.2}
	\begin{aligned}
	&\quad\int_0^af(\pi_1(\varphi_t(p,v)))^2\sin^2\left(\frac{\pi t}a\right){\rm Ric}(\varphi_t(p,v))dt\\
	&\leq(n-1)\frac{\pi^2}{a^2}\int_0^a f(\pi_1(\varphi_t(p,v)))^2\sin^2\left(\frac{\pi t}a\right)dt\\
	&\quad+(n-1)\int_0^a\langle\nabla f,\pi_2(\varphi_t(p,v))\rangle^2\sin^2\left(\frac{\pi t}a\right)dt,
	\end{aligned}
\end{equation}
where $\pi_1:(p,v)\mapsto p$ and $\pi_2:(p,v)\mapsto v$ are projections. This holds for all $(p, v)\in SM$. Now integrate \eqref{eq2.2} over the unit tangent bundle $SM$. The Sasaki metric on $TM$ induces a measure $d\mu$ on $SM$ with $d\mu={\rm dvol}_Md\omega_{n-1}$ called Liouville measure, where $d\omega_{n-1}$ is the volume element of $S^{n-1}$. By the invariance of $d\mu$ under the geodesic flow,  we have
\[
	\int_{SM}h~d\mu=\int_{SM}h\circ\varphi_t~d\mu
\]
for any $h\in C_c^\infty(SM)$ and $t\in\mathbb R$. We now integrate the three terms of \eqref{eq2.2} separately. Integrating the first term over $SM$, we obtain
\begin{align*}
	&\quad\int_{SM}\left(\int_0^af(\pi_1(\varphi_t(p,v)))^2\sin^2\left(\frac{\pi t}a\right){\rm Ric}(\varphi_t(p,v))dt\right)d\mu(p,v)\\
	&=\int_0^a\left(\int_{SM}f(\pi_1(\varphi_t(p,v)))^2{\rm Ric}(\varphi_t(p,v))d\mu(p,v)\right)\sin^2\left(\frac{\pi t}a\right)dt\\
	&=\int_0^a\sin^2\left(\frac{\pi t}a\right)dt\int_{SM}f(\pi_1(p,v))^2{\rm Ric}(p;v)d\mu(p,v)\\
	&=\frac{a}2\int_M\left(\int_{S_{p}M}f(\pi_1(p,v))^2{\rm Ric}(p;v)d\omega_{n-1}(v)\right){\rm dvol}_M(p)\\
	&=\frac{a}2\int_Mf(p)^2\left(\int_{S_{p}M}{\rm Ric}(p;v)d\omega_{n-1}(v)\right){\rm dvol}_M(p)\\
	&=\frac{a\omega_{n-1}}{2n}\int_Mf(p)^2{\rm scal}_M(p)~{\rm dvol}_M(p),
\end{align*}
where the order of integration can be interchanged because $f$ has compact support. Similarly, integrating the second term in \eqref{eq2.2} gives,
\[
	\int_{SM}\left(\int_0^a f(\pi_1(\varphi_t(p,v)))^2\sin^2\left(\frac{\pi t}a\right)dt\right)d\mu(p,v)
	=\frac{a\omega_{n-1}}2\int_Mf(p)^2~{\rm dvol}_M(p).
\]
For the third term of \eqref{eq2.2}, we obtain,
\begin{align*}
	&\quad\int_{SM}\left(\int_0^a\langle\nabla f,\pi_2(\varphi_t(p,v))\rangle^2\sin^2\left(\frac{\pi t}a\right)dt\right)d\mu(p,v)\\
	&=\frac{a}2\int_M\left(\int_{S_{p}M}\langle\nabla f,\pi_2(p,v)\rangle^2d\omega_{n-1}(v)\right){\rm dvol}_M(p)\\
	&=\frac{a\omega_{n-1}}{2n}\int_M|\nabla f(p)|^2~{\rm dvol}_M(p).
\end{align*}
Substituting these gives
\begin{align*}
	&\quad\frac{a\omega_{n-1}}{2n}\int_Mf^2{\rm scal}_M~{\rm dvol}_M\\
	&\leq(n-1)\frac{\pi^2}{2a}\omega_{n-1}\int_Mf^2~{\rm dvol}_M+(n-1)\frac{a\omega_{n-1}}{2n}\int_M|\nabla f|^2~{\rm dvol}_M.
\end{align*}
Simplifying, we have
\[
	\int_Mf^2{\rm scal}_M~{\rm dvol}_M\leq n(n-1)\frac{\pi^2}{a^2}\int_Mf^2~{\rm dvol}_M+(n-1)\int_M|\nabla f|^2~{\rm dvol}_M.
\]
Since ${\rm scal}_M\geq n(n-1)$, we have
\[
	\int_Mf^2{\rm scal}_M~{\rm dvol}_M\geq n(n-1)\int_Mf^2~{\rm dvol}_M,
\]
which yields
\[
	n\int_Mf^2~{\rm dvol}_M\leq n\frac{\pi^2}{a^2}\int_Mf^2~{\rm dvol}_M+\int_M|\nabla f|^2~{\rm dvol}_M,
\]
i.e.,
\[
	\frac{\int_M|\nabla f|^2~{\rm dvol}_M}{\int_Mf^2~{\rm dvol}_M}\geq n\left(1-\frac{\pi^2}{a^2}\right).
\]
Since $a>l$, we have
\[
	n\left(1-\frac{\pi^2}{a^2}\right)>n\left(1-\frac{\pi^2}{l^2}\right)=\lambda_0(M).
\]
Since $f\in C_c^\infty(M)\setminus \{0\}$ was arbitrary, we obtain
\[
	\lambda_0(M)\geq n\left(1-\frac{\pi^2}{a^2}\right)>\lambda_0(M),
\]
which is a contradiction. Hence ${\rm conj}(M)\leq l$.

\end{document}